\newtheorem{theorem}{Theorem}[section]
\newtheorem{remark}{Remark}[section]
\numberwithin{equation}{section}
\newcommand{\R}{{\mathbb R}}
\newcommand{\N}{{\mathbb N}}
\newcommand{\D}{{\mathbb D}}
\def\D{\mathbb D}
\begin{document}

\title{On the Laplace equation with non-local dynamical boundary conditions}

\author{\textsc{Raffaela Capitanelli \& Mirko D'Ovidio}\\
raffaela.capitanelli@uniroma1.it, mirko.dovidio@uniroma1.it\\
 \textit{Sapienza University of Rome, Rome, Italy}}
\maketitle

\begin{abstract}
Aim of the paper is to study non-local dynamic boundary conditions of reactive-diffusive type for the Laplace equation from analytic and probabilistic point of view. In particular, we  provide  compact and probabilistic representation of the solution together with an interpretation in term of boundary processes.
\end{abstract}

{\bf MSC 2020:} 60J50; 35J25

{\bf KEYWORDS:} Brownian motions; Dynamic boundary conditions; Non-Local operators.

%\tableofcontents

\section{Introduction}

Aim of the paper is to study  non-local dynamic boundary conditions of reactive-diffusive type  for  the Laplace equation. More precisely, for $k \in \mathbb{R}$ and $l >0$, we consider the following problem

\begin{align}
\label{E}
\left\lbrace
\begin{array}{ll}
\displaystyle \Delta u=0 \qquad
&\text{in
$(0,\infty)\times\Omega$,}\\
 D_t^{\alpha} u = k \partial_{\bf n} u +l\Delta_\Gamma u \qquad &\text{on
$(0,\infty)\times \Gamma$,}\\
u(0,x)=u_0(x) &
 \text{on $ \Gamma$,}
\end{array}
\right .
\end{align}
where $u=u(t,x)$, $t\in  [0, \infty)$, $x\in\Omega$ and
$\Gamma=\partial\Omega$.
The symbol $\Delta_\Gamma$ denotes the Laplace--Beltrami operator on $\Gamma$ and $\partial_{\bf n} u$ is the normal derivative being ${\bf n}$ the outward normal to $\Omega.$  We assume that $\Omega$ is a $C^\infty$ bounded domain of $\R^N$ ($N\ge 2$) and $\Gamma$ is a Riemannian manifold endowed with the natural  inherited metric. The non-local operator 
\begin{align}
\label{dc}
D^\alpha_t  u(t) = \frac{1}{\Gamma(1-\alpha)} \int_0^t u^\prime(s) (t-s)^{-\alpha} ds, \quad \alpha \in (0,1)
\end{align}
is the Caputo-D\v{z}rba\v{s}jan derivative. We recall that, for $z>0$, $\Gamma(z) = \int_0^\infty s^{z-1} e^{-s}ds$ is the absolutely convergent Euler integral also termed gamma function.\\

We provide a compact representation for the solution to the problem \eqref{E}. We exploit the result in \cite{VV1} for which adding the Laplace--Beltrami operator on the boundary ($l > 0$) the corresponding problem is well-posed for any value of $k$. Moreover, under suitable spectral conditions, we show that such solution has the probabilistic representation
\begin{align}
\mathbf{E}_{\mu(x)}[u_0(X^\Gamma \circ L_t)] = \int_\Gamma \mathbf{E}_y[u_0(X^\Gamma \circ L_t)] \mu(x,dy), \quad x \in \Omega \cup \Gamma
\label{repIniDist}
\end{align}
where the initial distribution $\mu(x,dy)$ is an harmonic measure and $X^\Gamma$ is a boundary Markov process with random time $L$ explaining the non-local dynamic. Thus, our problem can be treated in terms of a fractional Cauchy problem on compact manifold without boundary for which 
\begin{align*}
\mathbf{E}_{\mu(x)}[u_0(X^\Gamma \circ L_t)] = \mathbf{E}_x[u_0(X^\Gamma \circ L_t)], \quad x \in \Gamma
\end{align*}
gives the solution on $\Gamma$ according with the Poisson integral \eqref{repIniDist}.\\

The operator \eqref{dc} has been introduced in \cite{caputoBook,CapMai71,CapMai71b} by the first author and separately in a series of works starting from \cite{Dzh66,DzhNers68} by the second author.  For $\alpha=1$ the operator $D^\alpha_t$ becomes the ordinary derivative for which we get the dynamical boundary condition of reactive-diffusive type
\begin{align}
\dot{u} = k \partial_{\bf n} u + l \Delta_\Gamma u \quad \textrm{on } (0,\infty) \times \Gamma.
\label{RDtypeC}
\end{align}
From an analytical point of view, such  a  \textit{non fractional} dynamical boundary condition has been studied in \cite{VV1} for the Laplace equation.
In particular,  such a condition describes a heat conduction process in $\Omega$ with a heat source on the boundary which can depend on the heat flux around the boundary and on the heat flux across it (see for example \cite{GM} and the reference therein). We remark that in the case $l=0$, the problem has been studied by many authors (see for example \cite{E},  \cite{H}, \cite{JL}), in this case the corresponding problem is well-posed if and only if $k\leq 0$. For the parabolic case associated with \eqref{RDtypeC} we mainly refer to \cite{VV2} where the authors considered the heat equation in place of the Laplace equation. We refer to \cite{Dov24II} for the parabolic case associated with the non-local condition in \eqref{E} which has been investigated only in case $l=0$ and $k<0$. For the fractional Cauchy problem we may refer to a very large literature. Such problems are well-know and they have been investigated by many researchers on both regular domains (see for example \cite{Chen17, GLY15, Koc2011, MLP2001, MNV09, Toaldo2015}) or irregular domains (see for example \cite{CCL, CapDovFEforms19, CapDovFCP21}). Recently the non-local dynamic boundary conditions for the heat equation have been investigated by \cite{Dov22fcaa, Dov24fcaa} together with the associated probabilistic representation in terms of sticky Brownian motions.

The plan of the paper is the following.
In the second section  we study the  Laplace equation with fractional dynamical boundary conditions
 from the  analytical point of view and in particular we  obtain a representation formula for the solution.
 In the third  section  we  deal with   the  same problem   from the probabilistic  point of view  and in particular we  give a  interpretation for the solution in term of boundary processes.
 In the last section we  consider  the  Laplace equation with a general non local boundary condition.

\section{Compact representation}

In this section we study the Laplace equation   on a bounded regular
domain $\Omega$ of $\R^N$ ($N\ge 2$)   with  time-fractional dynamical
boundary condition of reactive--diffusive type  described in \eqref{E}.

We denote by $\mathcal{D}(\Gamma )$ the space  $C^\infty(\Gamma )$  and  by $\mathcal{D}'(\Gamma)$  its dual space, that is the space of distributions on $\Gamma$. Moreover for any $s \in \R$  we denote by  the symbols $H^s(\Omega)$ and $H^s(\Gamma)$  the Sobolev spaces of  distributions
respectively on $\Omega$  and on its boundary  $ \Gamma$ (see  \cite{LM}).

We recall that 
the Laplace-Beltrami operator $\Delta_\Gamma$ can be  defined on $C^\infty(\Gamma)$
by the formula
\begin{equation}\label{M1}
-\int_\Gamma (\Delta_\Gamma u)  v \,ds=\int _\Gamma \nabla_\Gamma u\,  \nabla_\Gamma v \,ds
\end{equation}
for any $u,v\in C^\infty(\Gamma)$ where 
we denote by $\nabla_\Gamma$ the Riemannian gradient and by $ds$ the natural volume element on $\Gamma.$
We recall that, for any $s\in \R $, the operator
$ \Delta_\Gamma : D(\Delta_\Gamma) = H^{s+2}(\Gamma)\to H^s(\Gamma)$
 generates an analytic semigroup on $H^{s+2}(\Gamma)$  (see, for example,  Appendix A in   \cite{VV1}).

Before stating our result,  we recall  some properties of the  Dirichlet--to--Neumann operator $A_{DN}$ (see \cite{LM}).

For any $g\in H^s(\Gamma)$, $s>0$, the
non--homogeneous Dirichlet problem
\begin{equation}\label{NH}
  \begin{cases}
        \Delta v=0,& \text{in $\Omega$}, \\
        v =g & \text{on $\Gamma$,}
  \end{cases}
\end{equation}
has a unique solution $v\in H^{s+1/2}(\Omega).$
Moreover,  for any datum  $g\in H^s(\Gamma)$  of the non--homogeneous Dirichlet problem
\eqref{NH},    the operator   $\D $ that associates the datum  $g$ to the solution  $v= \D g $  is linear and bounded from $H^s(\Gamma)$  to $H^{s+\frac12}(\Omega).$
As $v$  has
normal derivative $\partial_{\bf n} v \in H^{s-1}(\Gamma),$  the Dirichlet--to--Neumann operator   $A_{DN}:$   $g \mapsto \partial_{\bf n} v$ is
bounded from $H^s(\Gamma)$ to $H^{s-1}(\Gamma).$

We point out that, for all $u,v\in
C^\infty(\Gamma)$,  by integrating by parts, we obtain
\begin{equation}\label{M2}
\int_\Gamma A_{DN} u\,  v ds=\int_\Omega\nabla (\D u)\nabla (\D
v) d \mu \end{equation}
(we denote by $d\mu$ the natural volume element on $\Omega)$.

 We have that $A_{DN}$   generates an analytic  semigroup. Furthermore it is possible to prove that for $l>0$
the operator $k A_{DN} + l \Delta_\Gamma $ with domain $D(k A_{DN} + l \Delta_\Gamma) = H^{s+2}(\Gamma)$  generates an analytic  semigroup in $H^s$ (see  the proof of Theorem 1 in  \cite{VV1}).

\bigskip

Now our aim is to prove a representation formula for the solution of problem  \eqref{E}  following  the same approach of \cite{VV1}.
We start by recalling  Theorem 2 from \cite{VV1}.

\begin{theorem}\label{th1}

Let $l > 0.$ Then there is an Hilbert basis $\{\varphi_n, n\in  \N\}$ of $L^2(\Gamma)$, $\varphi_n\in \mathcal{D}(\Gamma),$  $\varphi_n$ real valued, and an increasing real sequence $\{\lambda_n,
n\in  \N\}$,   $\lambda_n\to\infty $  as $ n\to\infty,$ each $\lambda_n$ having finite multiplicity, such that for any $n\in \N$ a unique solution  $\psi_n$  (which belongs to $C^\infty(\bar\Omega )$
of the Dirichlet non-homogeneous problem

\begin{align}
\label{E1}
\left\lbrace
\begin{array}{ll}
\displaystyle \Delta \psi_n=0 \qquad
&\text{in
$\Omega$,}\\
\psi_n(x)=\varphi_n(x)&\text{on $\Gamma$}
\end{array}
\right.
\end{align}
solves the eigenvalue problem
\begin{align}
\label{E2}
\left\lbrace
\begin{array}{ll}
\displaystyle \Delta \psi_n=0 \qquad
&\text{in
$\Omega$,}\\
\lambda_n\psi_n =-k \partial_{\bf n} \psi_n - l \Delta_\Gamma \psi_n \qquad &\text{on
$ \Gamma$.}
\end{array}
\right .
\end{align}
\end{theorem}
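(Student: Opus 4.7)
The plan is to reduce the spectral problem \eqref{E2} on $\Omega\cup\Gamma$ to a purely boundary problem on $\Gamma$ by means of the Dirichlet-to-Neumann correspondence. Writing $\varphi_n:=\psi_n|_\Gamma$, the harmonic condition $\Delta\psi_n=0$ in $\Omega$ with $\psi_n=\varphi_n$ on $\Gamma$ forces $\psi_n=\D\varphi_n$ and hence $\partial_{\bf n}\psi_n=A_{DN}\varphi_n$. Thus \eqref{E2} is equivalent to the eigenvalue equation
\begin{align*}
B\varphi_n:=-kA_{DN}\varphi_n-l\Delta_\Gamma\varphi_n=\lambda_n\varphi_n\qquad\text{on }\Gamma,
\end{align*}
and once the spectral pairs $(\lambda_n,\varphi_n)$ are produced on $\Gamma$, the $\psi_n$ are recovered by solving \eqref{E1}.

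Combining \eqref{M1} and \eqref{M2}, I would introduce the symmetric bilinear form
\begin{align*}
a(u,v):=k\int_\Omega\nabla(\D u)\cdot\nabla(\D v)\,d\mu+l\int_\Gamma\nabla_\Gamma u\cdot\nabla_\Gamma v\,ds,\qquad u,v\in H^1(\Gamma),
\end{align*}
whose associated operator on $L^2(\Gamma)$ with domain $H^2(\Gamma)$ is exactly $B$. Because $l>0$ and the $\Omega$-integral is controlled by an $H^{1/2}(\Gamma)$-norm (since $\D:H^s(\Gamma)\to H^{s+1/2}(\Omega)$ is bounded), the form $a$ is closed and bounded below on $H^1(\Gamma)$, so $B$ is self-adjoint up to a finite shift. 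Since $\Gamma$ is compact, $-\Delta_\Gamma$ has compact resolvent and the first-order pseudodifferential operator $A_{DN}$ is a relatively compact perturbation; hence $B$ also has compact resolvent. The spectral theorem then yields the increasing sequence $\lambda_n\to\infty$ of real eigenvalues of finite multiplicity together with an orthonormal basis $\{\varphi_n\}$ of $L^2(\Gamma)$, chosen real-valued because the operator has real coefficients.

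Elliptic bootstrapping on $\Gamma$ upgrades the eigenfunctions to $\varphi_n\in\bigcap_s H^s(\Gamma)=C^\infty(\Gamma)$, and the Dirichlet regularity recalled for \eqref{NH} with $C^\infty$ boundary datum on a $C^\infty$ domain gives $\psi_n=\D\varphi_n\in C^\infty(\bar\Omega)$, so that \eqref{E1} and \eqref{E2} are both satisfied. The main technical obstacle is the semi-boundedness of $a$ when $k$ has an unfavorable sign: the contribution $k\int_\Omega|\nabla\D u|^2$ may be negative and, a priori, not dominated by the $L^2(\Gamma)$-norm alone, so one must exploit the coercivity of the dominant second-order term $l\int_\Gamma|\nabla_\Gamma u|^2$ together with the order-one nature of $A_{DN}$ to obtain a G\aa rding-type lower bound, using an interpolation estimate $\|A_{DN}u\|_{L^2(\Gamma)}\le\eps\|u\|_{H^1(\Gamma)}+C_\eps\|u\|_{L^2(\Gamma)}$. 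This coercivity, which is precisely the content of the semigroup-generation statement cited after \eqref{M2} (Theorem~1 in \cite{VV1}), is what makes the spectral construction succeed for every $k\in\R$ and concludes the proof.
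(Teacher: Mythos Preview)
The paper does not supply its own proof of this theorem: it explicitly introduces it as ``recalling Theorem~2 from \cite{VV1}'' and gives no argument beyond that citation. Your sketch is a faithful outline of the proof one finds in \cite{VV1}: reduce to the boundary operator $B=-kA_{DN}-l\Delta_\Gamma$ on $L^2(\Gamma)$, use the associated bilinear form together with the G\aa rding-type lower bound (this is exactly Lemma~1/Theorem~1 in \cite{VV1}, which you correctly invoke) to get self-adjointness up to a shift, deduce compact resolvent from the compact embedding $H^2(\Gamma)\hookrightarrow L^2(\Gamma)$, and then apply the spectral theorem plus elliptic regularity. So there is nothing to compare against in the present paper, and your approach matches the one in the original reference.
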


We now consider, for a real parameter $\Lambda,$ the operator 
$$A_{\Lambda} := - k A_{DN} - l \Delta_\Gamma + \Lambda I$$ 
in $H^s(\Gamma)$. It is possible to prove that there exists $\bar\Lambda\geq 0$ such that for any  $\Lambda\geq \bar\Lambda$ the following problem $$- k A_{DN} v- l \Delta_\Gamma v + \Lambda v=h$$ with $h$ an arbitrary element of $H^s(\Gamma)$ has a unique solution $v\in H^{s+2}(\Gamma)$ (see Lemma 1 in  \cite{VV1}).
Then from now on  we put $\Lambda=\bar\Lambda$ without loss of generality.
Denote by $ <\cdot ,\cdot >$ the norm $ (\cdot ,\cdot )_{L^2(\Gamma)}$.

For the operator $$A_{\Lambda}:D(A_{\Lambda})=H^2(\Gamma)\subset L^2(\Gamma)\to L^2(\Gamma)$$
we consider  nonnegative real powers $$A^{s/2}_{\Lambda}:D(A^{s/2}_{\Lambda})\subset L^2(\Gamma)\to L^2(\Gamma)$$
where $$D(A^{s/2}_{\Lambda})=\{ u\in L^2(\Gamma) : \sum_{n=1}^\infty | <u, \varphi_n>|^2 (\lambda_n+\Lambda)^s<\infty)\}$$
and $$A^{s/2}_{\Lambda}=\sum_{n=1}^\infty   <u, \varphi_n> (\lambda_n+\Lambda)^{\frac{s}2} \varphi_n$$ for $u\in D(A^{s/2}_{\Lambda}).$

It is possible to prove that $H^s(\Gamma)=D(A^{s/2}_{\Lambda})$ (see the proof of Theorem 2 in \cite{VV1}).
Then we equip $H^s(\Gamma),$ $s\in \R,$ with the scalar product $$((u,v))_{H^s(\Gamma)}=(A_\Lambda^{s/2} u, A_\Lambda^{s/2} v)_{L^2(\Gamma)}$$
and norm $$||| \cdot|||_{H^s(\Gamma)} =||A_\Lambda^{s/2} u||_{L^2(\Gamma)}$$ which is equivalent to the standard one.
Then  $ \{\varphi_n/|||\varphi_n|||_{H^s(\Gamma)}, n \in \N\}$ is an Hilbert basis of $H^s(\Gamma)$ for all $s\in \R,$ provided this space is endowed with  the  scalar
product $((\cdot ,\cdot ))_{H^s(\Gamma)} $ and norm $|||\cdot|||_{H^s( \Gamma)}$
We use the following characterization of Sobolev space $$H^s(\Gamma)=\{  u\in \mathcal{D}':   \sum_{n=1}^\infty | <u, \varphi_n>|^2 (\lambda_n+\Lambda)^s <\infty \}$$ (for the proof, see, for example,  Appendix B in \cite{VV1}).

Further on we consider the problem
\begin{align}
\label{ELambda}
\left\lbrace
\begin{array}{ll}
\displaystyle \Delta u=0 \qquad
&\text{in
$(0,\infty)\times\Omega$,}\\
 D_t^{\alpha} u = -A_\Lambda u \qquad &\text{on $(0,\infty)\times \Gamma$,}\\
u(0,x)=u_0(x) &
 \text{on $ \Gamma$,}
\end{array}
\right .
\end{align}
which coincides with \eqref{E} for $\Lambda=0$.  We say  that  $u$  is  solution  of problem  \eqref{ELambda}  if 
$$u= \D v$$
$$v\in C([0,\infty);H^{s}(\Gamma))$$ 
$$ D^\alpha_t  v \in C((0,\infty);H^{s}(\Gamma)) $$
such that 
\begin{align*}
D_t^{\alpha} v = - A_\Lambda v \;  \textrm{ on $H^s(\Gamma)$ for all $t>0$ with $v(0,x)=u_0(x)$  on $\Gamma$.}  
\end{align*}
We point out that as $u= \D v$ then  $u\in C^{\infty}(\Gamma)$ and $ \Delta u=0$  in classical sense.\\

We show below that the solution to $D_t^{\alpha} v = k \partial_{\bf n} v +l\Delta_\Gamma v - \Lambda v$ on $H^s(\Gamma)$ with initial datum $u_0 \in H^s(\Gamma)$ can be obtained, under suitable conditions, via time change according with the well-established theory associated with fractional Cauchy problems on compact manifold without boundary. \\

Thus, we provide spectral conditions for which, given any $u_0\in H^s(\Gamma)$ there is a unique  solution $v$ satisfying the time fractional equation  $D_t^{\alpha} v = k \partial_{\bf n} v +l\Delta_\Gamma v -\Lambda v$ on the boundary   $\Gamma$  with the initial condition $v(0,x)=u_0(x)$ on $ \Gamma$. Existence and uniqueness of the solution to problem  \eqref{E} is therefore obtained by noticing that $u = \D v$.

By using the previous arguments, we  prove the following  representation formula (see also Remark \ref{rmk:IMP}). 

First we recall that $E_{\alpha, \beta}$ with $\alpha,\beta>0$ is the well-known Mittag-Leffler function (see \cite{GorKilMaiRog} for definition and properties).\\

\begin{theorem}
\label{th2}
 Let $\Lambda=0$, $l > 0$, $k \in \mathbb{R}$ and $\alpha \in (0,1]$.  Let assume  $\lambda_1\geq 0.$
  Then for any $u_0 \in H^s( \Gamma)$ the solution $u$ of problem \eqref{ELambda} is given by
\begin{align}
u(t, x) = \sum_{n=1}^\infty
<u_0,\varphi_n>E_{\alpha,1}(-t^{\alpha}\lambda_n)\psi_n(x)
\label{uAlfa}
\end{align}  
the series being convergent in $C((0,\infty) \times \bar\Omega).$
\end{theorem}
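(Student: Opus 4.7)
My plan is to solve by a spectral ansatz using the eigensystem $\{(\lambda_n,\varphi_n)\}$ of Theorem~\ref{th1}, together with the Dirichlet lift $\D$. Writing $\psi_n=\D\varphi_n$, I look for the boundary trace as
\begin{align*}
v(t,\cdot)=\sum_{n\ge 1}c_n(t)\,\varphi_n,\qquad c_n(0)=\langle u_0,\varphi_n\rangle,
\end{align*}
and then set $u=\D v=\sum_{n\ge 1}c_n(t)\,\psi_n$. Formal substitution into the boundary condition, combined with the identity $-k\partial_{\bf n}\psi_n-l\Delta_\Gamma\varphi_n=\lambda_n\varphi_n$ from Theorem~\ref{th1} (recall that $\Lambda=0$), decouples the system into the scalar fractional ODEs $D_t^\alpha c_n=-\lambda_n c_n$ with $c_n(0)=\langle u_0,\varphi_n\rangle$. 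Their unique solutions $c_n(t)=\langle u_0,\varphi_n\rangle E_{\alpha,1}(-t^\alpha\lambda_n)$ give precisely \eqref{uAlfa}.

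Termwise verification is then immediate: each $\psi_n$ is harmonic in $\Omega$, each coefficient satisfies $D_t^\alpha E_{\alpha,1}(-t^\alpha\lambda_n)=-\lambda_n E_{\alpha,1}(-t^\alpha\lambda_n)$, and $E_{\alpha,1}(0)=1$ restores $u_0$. The real content is in justifying that \eqref{uAlfa} converges and may be differentiated term by term in $\Delta$ and in $D_t^\alpha$, $\Delta_\Gamma$ on the trace. Here I would invoke two standard ingredients: the assumption $\lambda_1\ge 0$ together with the classical Mittag-Leffler bound $|E_{\alpha,1}(-t^\alpha\lambda_n)|\le C/(1+t^\alpha\lambda_n)$, which yields uniform decay in $n$ on each $[t_0,T]\subset(0,\infty)$; and, on the space side, the maximum principle $\|\psi_n\|_{L^\infty(\bar\Omega)}\le\|\varphi_n\|_{L^\infty(\Gamma)}$ combined with a Sobolev embedding $H^r(\Gamma)\hookrightarrow C(\Gamma)$ and the norm equivalence $\|\varphi_n\|_{H^r(\Gamma)}\asymp(\lambda_n+\Lambda)^{r/2}$ encoded in the spectral characterization of $H^r(\Gamma)$ recalled before the theorem.

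Given these bounds, a Cauchy-Schwarz estimate against $\sum_n|\langle u_0,\varphi_n\rangle|^2(\lambda_n+\Lambda)^s<\infty$ produces uniform convergence of \eqref{uAlfa}, and of its differentiated series, on compact subsets of $(0,\infty)\times\bar\Omega$. The main obstacle I anticipate is balancing the polynomial growth of $\|\varphi_n\|_\infty$ against the decay of the Mittag-Leffler factors (which requires $\lambda_n>0$): a possible $\lambda_n=0$ mode, allowed by the hypothesis $\lambda_1\ge 0$, contributes a harmonic summand that is constant in $t$ and must be treated separately in the estimates. Uniqueness is then obtained by testing \eqref{ELambda} against each $\varphi_n$, which again reduces to uniqueness for the scalar fractional ODE $D_t^\alpha c_n=-\lambda_n c_n$.
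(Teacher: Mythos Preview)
Your approach is essentially the same as the paper's: both proceed by spectral ansatz/separation of variables against the eigensystem $\{(\lambda_n,\varphi_n)\}$ of Theorem~\ref{th1}, reduce to the scalar fractional ODE $D_t^\alpha c_n=-\lambda_n c_n$, solve it by $E_{\alpha,1}(-t^\alpha\lambda_n)$, and justify convergence using the complete monotonicity of $E_{\alpha,1}$ together with the estimate $|E_{\alpha,1}(-t^\alpha\lambda_n)|\le C/(1+t^\alpha\lambda_n)$ (Theorem~1.6 in \cite{POD}).

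The only notable difference is in the convergence bookkeeping. The paper stays in the Hilbert scale throughout: it first proves convergence of the boundary series in $C([0,\infty);H^s(\Gamma))$ using $|||\varphi_n|||^2_{H^s}=(\lambda_n+\Lambda)^s$, then transports this to $C([0,\infty);H^{s+1/2}(\Omega))$ by the boundedness of $\D:H^s(\Gamma)\to H^{s+1/2}(\Omega)$, and handles the time-fractional derivative in $C([\varepsilon,T);H^s(\Gamma))$ via the same Mittag-Leffler bound. You instead aim directly for $L^\infty$ control on $\bar\Omega$ through the maximum principle $\|\psi_n\|_{L^\infty(\bar\Omega)}\le\|\varphi_n\|_{L^\infty(\Gamma)}$ plus a Sobolev embedding on $\Gamma$; this is a perfectly legitimate variant and is in fact more transparently aligned with the stated $C((0,\infty)\times\bar\Omega)$ conclusion, whereas the paper's route implicitly requires $s$ large enough for $H^{s+1/2}(\Omega)\hookrightarrow C(\bar\Omega)$. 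Your remark that a zero eigenvalue must be handled separately is a refinement the paper does not make explicit.
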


\begin{proof}  
Let us consider $\alpha \in (0,1)$. By using the method of separation of variables, we put
 $u(x, t) = G(t)F(x).$   Then substituting into the boundary condition in \eqref{E}, we obtain on $\Gamma$
$$ F(x) D_t^{\alpha} G(t)= G(t)( k  \partial_{\bf n} F(x) +l\Delta_\Gamma F(x)) $$  and therefore

$$ \frac{ D_t^{\alpha} G(t)}{G(t)}= \frac{ k \partial_{\bf n} F(x) +l\Delta_\Gamma F(x)}{F(x)}=-\lambda$$ for some $\lambda>0.$

Then we obtain  
\begin{equation}
\label{1} 
k \partial_{\bf n} F(x) + l\Delta_\Gamma F(x)=-\lambda F(x) \end{equation}
and  \begin{equation}\label{2}D_t^{\alpha} G(t)= -\lambda G(t).\end{equation}

From Theorem \ref{th1},   we have that  there is an Hilbert basis $\{\varphi_n, n\in  \N\}$ of $L^2(\Gamma)$, $\varphi_n\in \mathcal{D}(\Gamma),$  $\varphi_n$ real valued, and an increasing real sequence $\{\lambda_n,
n\in  \N\}$,   $\lambda_n\to\infty $  as $ n\to\infty,$ each $\lambda_n$ having finite multiplicity, such that for any $n\in \N$
  the first eigenvalue problem \eqref{1} is solved by   
  $$k \partial_{\bf n} \psi_n   + l \Delta_\Gamma \psi_n = - \lambda_n\psi_n .$$

It is well-known that the solution to
\begin{align}
D^\alpha_t \varphi = - c\, \varphi, \quad \varphi(0)=1, \quad c>0,
\label{eqML}
\end{align}
is given by the Mittag-Leffler function $\varphi(t)=E_{\alpha,1}(-c t^\alpha)$. 
Then  second eigenvalue problem \eqref{2} with $\lambda=\lambda_n$ is solved by the function  $ E_{\alpha,1}(-t^{\alpha}\lambda_n).$

By considering  the  initial datum $$u_0=\sum_{n=1}^\infty
<u_0,\varphi_n>\varphi_n$$
in $H^s( \Gamma)$
we obtain  that for $x\in \Gamma$
 $$u(t, x) =\sum_{n=1}^\infty
<u_0,\varphi_n>E_{\alpha,1}(-t^{\alpha}\lambda_n)\varphi_n(x)$$ in $H^s( \Gamma)$  for all $t\geq 0$ where the convergence is  in the  $C([0,\infty); H^s(\Gamma )).$

In fact we have that  the serie converges in  $C([0,T); H^s(\Gamma ))$  for all $T > 0$ as
$$|||\varphi_n|||^2_{H^s(\Gamma)}=(\lambda_n+\Lambda)^s$$ for all $s\in   \R$ and $n\in \N$ and so
$$||| <u_0,\varphi_n>E_{\alpha,1}(-t^{\alpha}\lambda_n)\varphi_n(x)|||^2_{H^s(\Gamma)}\leq | <u_0,\varphi_n> |^2 (E_{\alpha,1}(-t^{\alpha}\lambda_1))^2 (\lambda_n+\Lambda)^s$$
by using the fact that $E_{\alpha,1}(-t^{\alpha}\lambda_n)$ is completely monotone for $0\leq t \leq T$ and  $0 < \alpha < 1$ and $\lambda_n\geq \lambda_1.$ 
Therefore as $u_0\in H^s(\Gamma )$ and then $\sum_{n=1}^{\infty} | <u_0,\varphi_n> |^2(\lambda+\Lambda)^s<\infty$  we obtain that  
 $$\sum_{n=1}^{\infty} || <u_0,\varphi_n>E_{\alpha,1}(-t^{\alpha}\lambda_n)\varphi_n(x)||^2_{C([0,T], H^s(\Gamma))}< \infty$$
and then the serie converges in  $C([0,T); H^s(\Gamma ))$ for all $T > 0$  and therefore in  $C([0,\infty); H^s(\Gamma)).$

Because, for any datum  $g\in H^s(\Gamma)$  of the non--homogeneous Dirichlet problem
\eqref{NH},    the operator  $ \D$ that associates the datum  $g$ to the solution  $v= \D g$  is linear and bounded from $H^s(\Gamma)$  to $H^{s+\frac12}(\Omega),$  by considering  $\D\varphi_n=\psi_n,$ we obtain that
$$u(t, x) =\sum_{n=1}^\infty
<u_0,\varphi_n>E_{\alpha,1}(-t^{\alpha}\lambda_n)\psi_n(x)$$  converges in $H^{s+1/2}( \Omega)$  for all $t\geq 0$ where the convergence is  in the  $C([0,\infty); H^{s+1/2}(\Omega )) $ topology.

Now we consider  the time fractional derivative of $$u(t, x) =\sum_{n=1}^\infty
<u_0,\varphi_n>E_{\alpha,1}(-t^{\alpha}\lambda_n)\varphi_n(x).
$$ in $C([\varepsilon,T); H^s(\Gamma))$ for all $0<\varepsilon<T<\infty.$

We prove the convergence of  $$\sum_{n=1}^\infty
<u_0,\varphi_n> (-\lambda_n)E_{\alpha,1}(-t^{\alpha}\lambda_n)\varphi_n(x)
$$
by  considering $$||| <u_0,\varphi_n> (-\lambda_n) E_{\alpha,1}(-t^{\alpha}\lambda_n)\varphi_n(x)|||^2_{H^s(\Gamma)}\leq | <u_0,\varphi_n> |^2 (E_{\alpha,1}(-t^{\alpha}\lambda_n))^2 |\lambda_n|^2 (\lambda_n+\Lambda)^s.$$

By using again the  property of  $E_{\alpha,1}$  with $0 < \alpha < 1,$   (see Theorem 1.6 in \cite{POD})  we have that   $$|(E_{\alpha,1}(-t^{\alpha}\lambda_n))^2 |\lambda_n|^2\leq (\frac 1{1+ t^{\alpha}\lambda_n})^2 |\lambda_n|^2.$$ 

Then there exist $n_0$ such that, for any $n>n_0$
  for  $0<\varepsilon<t,$ we have  $$||| <u_0,\varphi_n> (-\lambda_n) E_{\alpha,1}(-t^{\alpha}\lambda_n)\varphi_n(x)|||^2_{H^s(\Gamma)}\leq  C_\varepsilon | <u_0,\varphi_n> |^2  (\lambda_n+\Lambda)^s$$ and then the serie converges in $C([\varepsilon,T); H^s(\Gamma))$ for all $0<\varepsilon<T<\infty.$

The Laplace equation  on the bulk is trivially satisfied for the construction of $ \psi_n.$

For $\alpha=1$, it is well-known that $E_{1,1}(z) = e^{-z}$ and then the solution $u$ takes the form
\begin{align}
u(t, x) =\sum_{n=1}^\infty
<u_0,\varphi_n> e^{-t \lambda_n}\psi_n(x)
\label{uUno}
\end{align}

which is the solution as it has been proved in Theorem 3 of  \cite{VV1}.

\end{proof}

\begin{remark}
Observe that the spectral condition is not  trivial. We point out that, for example, the  condition  $\lambda_1\geq 0$ is satisfied when  $\Omega$ is an  Euclidean ball with radius $R$  in $\R^N$ if $k \leq l R (N-1)$
(see Theorem 6 in \cite{VV1}).
\label{rmk:IMP}
\end{remark}

Recall that 
\begin{align*}
-A_\Lambda = k A_{DN} + l \Delta_\Gamma - \Lambda I, \quad k \in \mathbb{R},\, l> 0,\, \Lambda > 0
\end{align*}
is self-adjoint, positive operator (see \cite[proof of Theorem 2]{VV1}). By using the  one to one correspondence between the non-positive self-adjoint operator $-A_\Lambda$ and the family of closed symmetric form (see Theorem 1.3.1 in \cite{FUK}) we consider the closed symmetric form   $\mathcal{E} $
defined as  
\begin{align}
\mathcal{E}(u,v)=(\sqrt{A_\Lambda u}, \sqrt{A_\Lambda v})\end{align}
with domain  $D(\mathcal{E}) =D(\sqrt{A_\Lambda u})$. In particular, by using \eqref{M1} and \eqref{M2} we obtain
\begin{align}
\mathcal{E}(u,v) =  - k \int_\Omega\nabla (\D u)\nabla (\D
v) \,d\mu+ l \int _\Gamma \nabla_\Gamma u\,  \nabla_\Gamma v \,ds + \Lambda \int _\Gamma uv\,ds
\label{DFprocess}.
\end{align}
For $k \leq 0$ we get a Dirichlet form 
 with Markovian semigroups and resolvents (see, for definitions and properties, see \cite{FUK}).

We remark that the previous form can be associated to the so-called Venttsel' or  Wentzell  boundary condition (see for example  pag 26 in \cite{{GWbook}}).
We point out that recently  fractional-in-time Venttsel' problems in irregular  domains  has been studied  in \cite{CCL}  by proving  well-posedness,  regularity and asymptotic results. \\

In the next section we consider the probabilistic representation of the solution.

\section{Probabilistic representation}

We introduce briefly the processes we deal with further on. As usual, we denote by $\mathbf{E}_x$ the mean operator under the measure $\mathbf{P}_x$ where $x$ is a starting point:
\begin{itemize}
\item[i)] $X^+= \{X^+_t\}_{t\geq 0}$ is a reflecting Brownian motion on $\overline{\Omega}$ with boundary local time $\gamma^+ = \{\gamma^+_t\}_{t\geq 0}$. The process $X^+$ has generator $(G^+, D(G^+))$ where $G^+=\Delta$ is the Neumann Laplacian with 
\begin{align*}
D(G^+)= \{\varphi, \Delta \varphi \in C(\overline{\Omega}),\, \varphi \in H^1(\Omega)\,:\, \partial_{\bf n} \varphi = 0\};
\end{align*}
\item[ii)] $H=\{H_t\}_{t\geq 0}$ is a subordinator with $\mathbf{E}_0[\exp(-\lambda H_t)] = \exp(-t \Phi(\lambda))$, $\lambda  >0$;
\item[iii)] $L=\{L_t\}_{t\geq 0}$ is the inverse $L_t= \inf\{s \geq 0\,:\, H_s >t\}$ to $H$.
\end{itemize}
Moreover, we introduce the first hitting time
\begin{align*}
\tau := \inf\{t\,:\, X^+_t \in \Gamma\}.
\end{align*}
Notice that we may equivalently consider the first hitting time of a Brownian motion on $\mathbb{R}^d$ started at $x \in \Omega$. We  underline that $X^+$ will be a reference (base) process to start with all over the paper. The symbol \begin{align}
\label{symbPhi}
\Phi(\lambda) = \int_0^\infty \left( 1 - e^{-\lambda t} \right)\Pi(dt), \quad \lambda > 0
\end{align}
is the Bernstein function (or L\'{e}vy symbol) characterizing the subordinator $H$.   We also introduce the Caputo-D\v{z}rba\v{s}jan type derivative defined as the convolution operator
\begin{align*}
D^\Phi_t \varphi(t) = (\varphi^\prime * \kappa)(t), \quad \alpha \in (0,1)
\end{align*}
involving the singular kernel $\kappa$ for which
\begin{align*}
\int_0^\infty e^{-\lambda t} \kappa(t)dt = \frac{\Phi(\lambda)}{\lambda}, \quad \lambda>0.
\end{align*} 
It is well-known that $\kappa$ can be identified as the tail of a L\'{e}vy measure $\Pi$ in \eqref{symbPhi}. In particular, $\kappa(t) = \Pi(t, \infty)$. 

We point out that for $\Phi(\lambda)=\lambda^\alpha$ with $\alpha \in (0,1)$, that is the case of stable subordinators, $D^\Phi_t = D_t^{\alpha}$ introduced in \eqref{dc}.
For a discussion on $H$ and $L$ we refer to the book \cite{Ber99}. Here we only provide the result in Theorem \ref{thm:eqM} below for the sake of completeness.

Our discussion in this section will focus on the following facts. The well-known Dynkin's formula 
\begin{align*}
\mathbf{E}_x[u(X^+_\tau)] = u(x)  +  \mathbf{E}_x[\int_0^\tau \Delta u(X^+_s) ds]
\end{align*}
holds in case $\Omega$ is a transient domain (that is $\mathbf{P}_x(\tau < \infty) = 1$ for every $x \in \Omega$) with $\mathbf{E}_x[\tau] < \infty$ for every $x \in \Omega$. Moreover, we ask for $u$ to be continuous and bounded on $\overline{\Omega}$.  

If $u$ is harmonic, that is $\Delta u=0$ in $\Omega$, then (also neglecting the condition $\mathbf{E}_x[\tau] < \infty$) the Dynkin's formula writes $\mathbf{E}_x[u(X^+_\tau)] = u(x)$ where $\mathbf{P}_x(X^+_\tau \in dy)$ plays the role of harmonic measure (or Poisson kernel). In particular, $u$ must satisfy the mean value property in $\Omega$ and this implies that $u \in C^\infty(\overline{\Omega})$. 

If $\Omega$ is a regular (that is $\mathbf{P}_x(\tau =0)=1$ for every $x \in \Gamma$) and  transient domain, then every bounded and continuous function on $\Gamma$ has a unique, bounded, harmonic extension.

We recall that every bounded domain is transient. In our case, in which $\Gamma$ is a smooth boundary, the harmonic measure is absolutely continuous with respect to the surface measure and the Poisson kernel can be explicitly calculated depending on the dimension $N$. It worth noticing what follows. For $N\geq 3$ the Brownian motion is transient. For $N=2$ the Brownian motion visits a given point with probability zero whereas it visits any neighbourhood of that point with probability one. For $N=1$ the Brownian motion visits a point infinitely many times and the corresponding set of hitting times is a perfect set (a large and uncountable set of zero Lebesgue measure, there are no isolated points). We only consider $\Omega \subset \mathbb{R}^N$ with $N\geq 2$.

\begin{theorem}
Let us write $M_\Phi(t, \theta) = \mathbf{E}_0[\exp(-\theta L_t)]$, $t>0$, $\theta>0$. Then, $\varphi(t) = M^0_\Phi(t, \theta)$ is the unique solution to
\begin{equation*}
\left\lbrace
\begin{array}{ll}
\displaystyle D^\Phi_t \, \varphi(t) = - \theta\, \varphi(t), & t>0,\\
\displaystyle \varphi(0)=1.
\end{array}
\right. 
\end{equation*} 
\label{thm:eqM}
\end{theorem}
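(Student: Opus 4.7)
The plan is to verify the claim via the Laplace transform in the time variable, which is the standard tool for fractional/non-local evolution equations and cleanly uses the inverse relation $\mathbf{P}_0(L_t \le s) = \mathbf{P}_0(H_s \ge t)$.

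First I would compute $\widetilde{\varphi}(\lambda) := \int_0^\infty e^{-\lambda t} \mathbf{E}_0[e^{-\theta L_t}]\,dt$ explicitly. Writing $e^{-\theta L_t} = \theta \int_0^\infty e^{-\theta s}\mathbf{1}_{\{L_t \le s\}}\,ds$ and applying Fubini, the inner integral in $t$ becomes
\begin{align*}
\int_0^\infty e^{-\lambda t}\mathbf{P}_0(L_t \le s)\,dt
= \int_0^\infty e^{-\lambda t}\mathbf{P}_0(H_s \ge t)\,dt
= \frac{1}{\lambda}\bigl(1 - e^{-s\Phi(\lambda)}\bigr),
\end{align*}
where the last step uses $\mathbf{E}_0[e^{-\lambda H_s}] = e^{-s\Phi(\lambda)}$. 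Integrating against $\theta e^{-\theta s}\,ds$ yields
\begin{align*}
\widetilde{\varphi}(\lambda) \;=\; \frac{\Phi(\lambda)}{\lambda\bigl(\Phi(\lambda)+\theta\bigr)}.
\end{align*}

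Next I would Laplace-transform the candidate equation. Since $D^\Phi_t\varphi = \varphi' * \kappa$ and $\widetilde{\kappa}(\lambda)=\Phi(\lambda)/\lambda$, the convolution theorem gives
\begin{align*}
\mathcal{L}\bigl[D^\Phi_t\varphi\bigr](\lambda)
\;=\; \frac{\Phi(\lambda)}{\lambda}\bigl(\lambda\widetilde{\varphi}(\lambda)-\varphi(0)\bigr)
\;=\; \Phi(\lambda)\widetilde{\varphi}(\lambda) - \frac{\Phi(\lambda)}{\lambda},
\end{align*}
using the initial condition $\varphi(0)=1$. Equating with $-\theta\widetilde{\varphi}(\lambda)$ and solving algebraically returns exactly the formula above, confirming that $\varphi(t)=\mathbf{E}_0[e^{-\theta L_t}]$ satisfies the equation.

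For uniqueness, if $\varphi_1,\varphi_2$ are two solutions in a class of bounded continuous functions on $[0,\infty)$ (which is the natural class, since $\mathbf{E}_0[e^{-\theta L_t}]\in[0,1]$), their difference $w = \varphi_1-\varphi_2$ solves $D^\Phi_t w = -\theta w$ with $w(0)=0$. Taking the Laplace transform, the same algebra forces $(\Phi(\lambda)+\theta)\widetilde{w}(\lambda)=0$, hence $\widetilde{w}\equiv 0$, and injectivity of the Laplace transform gives $w\equiv 0$.

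The main technical point is the first step: justifying Fubini and ensuring that the derivative $\varphi'(t)$ exists in a sense strong enough for the convolution $\varphi'*\kappa$ to be meaningful and to admit the claimed Laplace transform. This is handled by the continuity and monotonicity of $t\mapsto L_t$ (as inverse of the strictly increasing subordinator $H$) together with dominated convergence, showing that $t\mapsto \mathbf{E}_0[e^{-\theta L_t}]$ is absolutely continuous and non-increasing on $(0,\infty)$, so that all Laplace-transform manipulations above are legitimate.
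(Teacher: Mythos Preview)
Your proof is correct and follows essentially the same route as the paper: both arguments work entirely on the Laplace side, use the convolution identity $\mathcal{L}[D^\Phi_t\varphi](\lambda)=\tfrac{\Phi(\lambda)}{\lambda}(\lambda\widetilde{\varphi}(\lambda)-1)$, the inverse relation $\{L_t\le s\}=\{H_s>t\}$ together with $\mathbf{E}_0[e^{-\lambda H_s}]=e^{-s\Phi(\lambda)}$, and conclude uniqueness by injectivity of the Laplace transform on continuous bounded functions. The only cosmetic difference is the order: the paper first Laplace-transforms the equation and then identifies $\widetilde{\varphi}(\lambda)=\tfrac{\Phi(\lambda)}{\lambda}\tfrac{1}{\theta+\Phi(\lambda)}$ as the transform of $\mathbf{E}_0[e^{-\theta L_t}]$, whereas you compute the transform of the candidate first and then match; your version is slightly more explicit about the regularity needed for $\varphi'$ and for the uniqueness class.
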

\begin{proof}
The proof follows by standard arguments. Since $D^\Phi_t$ is a convolution-type operator, we get
\begin{align*}
\int_0^\infty e^{-\lambda t} D^\Phi_t \varphi\, dt 
= & \left( \int_0^\infty e^{-\lambda t} \kappa(t)dt \right) \left( \int_0^\infty e^{-\lambda t} \varphi^\prime(t)\, dt \right)\\
= & \frac{\Phi(\lambda)}{\lambda}(\lambda \tilde{\varphi}(\lambda) - 1), \quad \lambda>0.
\end{align*}
Our problem leads to
\begin{align*}
\Phi(\lambda)\, \tilde{\varphi}(\lambda) + \theta \tilde{\varphi}(\lambda) = \frac{\Phi(\lambda)}{\lambda} \quad \textrm{from which} \quad \tilde{\varphi}(\lambda) = \frac{\Phi(\lambda)}{\lambda} \frac{1}{\theta + \Phi(\lambda)}.
\end{align*}
We immediately see that
\begin{align*}
\tilde{\varphi}(\lambda) = \int_0^\infty e^{-\theta s} \frac{\Phi(\lambda)}{\lambda} e^{-s \Phi(\lambda)} ds =  \int_0^\infty e^{-\lambda t} \int_0^\infty e^{-\theta s} \mathbf{P}_0(L_t \in ds) dt
\end{align*}
where the last identity follows from $\mathbf{P}_0(L_t < s) = \mathbf{P}_0(H_s > t)$. Thus, $\varphi(t) = \mathbf{E}_0 [\exp(-\theta L_t)]$ which is continuous on $[0, \infty)$ and this implies uniqueness of the inverse Laplace transform.
\end{proof}

Further on we consider $\Phi(\lambda)=\lambda^\alpha$ with $\alpha \in (0,1).$ Only our last result is concerned with a general $\Phi$. Moreover, we underline that
\begin{align}
-A_\Lambda = kA_{DN} + l\Delta_\Gamma - \Lambda I \quad \textrm{in case} \quad k \leq 0, \; l \geq 0, \; \Lambda\geq 0
\label{Agen}
\end{align}
with
\begin{align*}
D(-A_\Lambda) \subset C(\Gamma) \cap D(A^{s/2}_\Lambda)
\end{align*}
plays a special role. In particular,  $(-A_0, D(-A_0))$ where
\begin{align*}
-A_0 = kA_{DN} + l\Delta_\Gamma \quad \textrm{with} \quad k<0,\; l \geq 0
\end{align*}
generates a Markov process on $\Gamma$ with right-continuous paths. The negative Dirichlet-Neumann operator $-A_{DN}$ is a non-local operator introducing jumps and the Laplace-Beltrami operator $\Delta_\Gamma$ gives the diffusive part. \\

Let us denote by $X^\Gamma=\{X^\Gamma_t\}_{t\geq 0}$ the process on $\Gamma$ with generator $(-A_\Lambda, D(-A_\Lambda))$. Since $\Omega$ is smooth, then there exist a unique linear and bounded trace operator $T:  H^1(\Omega) \to L^2(\Gamma)$ such that $Tu= u|_\Gamma$. Consider the Dirichlet form
\begin{align}
\mathcal{E}(Tu,Tv) := \int_\Omega \nabla u \nabla v\,  dx, \quad u,v \in F := \{\varphi \in H^1(\Omega)\,:\, \Delta \varphi = 0\}
\label{DFDN}
\end{align}
with $D(\mathcal{E}) := \{ T\varphi\,:\, \varphi \in H^1(\Omega) \}$. Thus, $D(\mathcal{E}) = H^{1/2}(\Gamma)$ is the trace space. There exists a Markov process on $\Gamma$ associated with $(\mathcal{E}, D(\mathcal{E}))$, see \cite[Proposition 3.1]{BS}. In particular, for $l=0$ and $\Lambda=0$, we say that $X^\Gamma$ is the process on $\Gamma$ associated with the Dirichlet form $(\mathcal{E}, D(\mathcal{E}))$ given in \eqref{DFDN} for $k<0$. It is therefore a pure jump process with generator $(-A_{DN}, D(-A_{DN}))$ up to some scaling introduced by $|k|$. The operator $\Delta_\Gamma$ is associated with a Brownian diffusion on the compact manifold $\Gamma$. It is well-known that we have a Markov semigroup with compact representation. The conservative part introduced by $\Lambda>0$ can be treated in a standard way. For $k, l, \Lambda$ as specified in \eqref{Agen}, the Dirichlet form \eqref{DFprocess} ensures existence of a stochastic process associated with $(-A_\Lambda, D(-A_\Lambda))$. Moreover, from the theory of Dirichlet form, we have a Markov semigroup.

We now underline a connection with time-changed semigroups. We recall that, for $k \in \mathbb{R}$ and $l>0$, the problem 
\begin{align}
\left\lbrace
\begin{array}{ll}
\displaystyle \Delta w =0 \qquad
&\text{in
$(0,\infty)\times\Omega$,}\\
\dot{w} = k \partial_{\bf n} w +l\Delta_\Gamma w \qquad &\text{on
$(0,\infty)\times \Gamma$,}\\
w(0,x)=u_0(x) &
 \text{on $ \Gamma$,}
\end{array}
\right .
\label{EsolVVw}
\end{align}
has been investigated in \cite{VV1} where they obtained an analytic semigroup with compact representation. We restrict our analysis in case of non negative eigenvalues of $-A_0$.

\begin{theorem}
Let $\Lambda=0$, $l > 0$, $k \in \mathbb{R}$ and $\alpha \in (0,1]$.  Let assume  $\lambda_1\geq 0.$ Let $w$ be the solution to \eqref{EsolVVw}. Then, the solution \eqref{uAlfa} can be written as 
\begin{align*}
u(t,x) = \int_0^\infty w(s,x) \mathbf{P}_0(L_t \in ds)
\end{align*}
where 
\begin{align}
\int_0^\infty e^{-\lambda t} \mathbf{P}_0(L_t \in ds) = E_\alpha(-\lambda t^\alpha), \quad t \geq 0,\; \lambda \geq 0.
\label{ContDensL}
\end{align}
\label{thm:SOLw}
\end{theorem}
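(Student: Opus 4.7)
The plan is to substitute the compact representation \eqref{uUno} of $w$ (from Theorem 3 of \cite{VV1}) into the proposed integral and then exchange the summation with the integration against $\mathbf{P}_0(L_t\in ds)$. Once the exchange is legitimate, the scalar identity \eqref{ContDensL}, applied with $\lambda=\lambda_n$, converts each factor $e^{-s\lambda_n}$ into $E_{\alpha,1}(-t^\alpha\lambda_n)$, and the resulting series is precisely \eqref{uAlfa}. The spectral assumption $\lambda_1\geq 0$ enters in two ways: it makes $\lambda=\lambda_n$ an admissible parameter in \eqref{ContDensL}, and it furnishes the uniform bound $0<e^{-s\lambda_n}\leq 1$ for all $s\geq 0$ and $n\in\N$, which is needed below.

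To justify the exchange I would work in the Banach space $H^{s+1/2}(\Omega)$. Repeating the estimates from the proof of Theorem \ref{th2} specialized to $\alpha=1$, and using the continuity of $\D:H^s(\Gamma)\to H^{s+1/2}(\Omega)$, one obtains the uniform bound $\|w(\sigma,\cdot)\|_{H^{s+1/2}(\Omega)}\leq C\,\|u_0\|_{H^s(\Gamma)}$ for every $\sigma\geq 0$, so that $\sigma\mapsto w(\sigma,\cdot)$ is Bochner integrable against the probability measure $\mathbf{P}_0(L_t\in d\sigma)$. For every finite partial sum, linearity of the Bochner integral gives
\begin{align*}
\int_0^\infty \sum_{n=1}^N <u_0,\varphi_n>e^{-\sigma\lambda_n}\psi_n(x)\,\mathbf{P}_0(L_t\in d\sigma) = \sum_{n=1}^N <u_0,\varphi_n>\psi_n(x)\int_0^\infty e^{-\sigma\lambda_n}\,\mathbf{P}_0(L_t\in d\sigma).
\end{align*}
Letting $N\to\infty$, the left-hand side converges in $H^{s+1/2}(\Omega)$ to $\int_0^\infty w(\sigma,\cdot)\,\mathbf{P}_0(L_t\in d\sigma)$ by dominated convergence, while, invoking \eqref{ContDensL} term by term, the right-hand side converges in the same topology to the Mittag-Leffler series \eqref{uAlfa}, whose convergence was already established in Theorem \ref{th2}.

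The only delicate point is producing a single dominator controlling the tails of the series uniformly in $\sigma\geq 0$; this is precisely where $\lambda_n\geq\lambda_1\geq 0$ is used, and once in hand the passage to the limit is routine. The scalar identity \eqref{ContDensL} itself requires no separate proof, since it is the content of Theorem \ref{thm:eqM} applied with $\Phi(\lambda)=\lambda^\alpha$ and $\theta=\lambda_n$: the map $t\mapsto \mathbf{E}_0[\exp(-\lambda_n L_t)]$ solves the Mittag-Leffler equation \eqref{eqML} with $c=\lambda_n$, hence equals $E_{\alpha,1}(-\lambda_n t^\alpha)$. In this sense the theorem reduces to a spectral decomposition paired with the subordination of the exponential eigenvalue dynamics to the Mittag-Leffler one via the inverse stable subordinator $L$.
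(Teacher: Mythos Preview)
Your proposal is correct and follows essentially the same route as the paper: both arguments rest on the spectral expansions \eqref{uAlfa} and \eqref{uUno} together with the scalar identity \eqref{ContDensL}, and the heart of the proof is the interchange of the eigenfunction series with the integral against $\mathbf{P}_0(L_t\in ds)$. The only cosmetic difference is the direction of the computation (the paper starts from \eqref{uAlfa} and arrives at the integral of $w$, you start from the integral of $w$ and arrive at \eqref{uAlfa}), and you supply a more explicit justification of the interchange via Bochner integrability and dominated convergence than the paper does.
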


\begin{proof}
It is well-known that \eqref{ContDensL} hold true (see \cite{Bingham71}). Indeed, $L$ is an inverse to a stable subordinator. Moreover,
\begin{align*}
E_\alpha(-\lambda t^\alpha) = \sum_{k \geq 0} \frac{(-\lambda t^\alpha)^k}{\Gamma(\alpha k + 1)} \quad \textrm{becomes} \quad E_1(-\lambda t) = e^{-\lambda t}
\end{align*}
for $\alpha=1$. From Theorem \ref{th2}, for $\alpha=1$, the solution $u$ takes the form 
\begin{align*}
u(t, x) =  \sum_{n=1}^\infty <u_0,\varphi_n> e^{-t \lambda_n}\psi_n(x)
\end{align*}
which can be associated with the fact that $L_t \to t$ a.s. as $\alpha \to 1$ (see \cite{Ber99}). Thus, we conclude that, $\forall\, T>0$, pointwise on $(0, T) \times \overline{\Omega}$, 
\begin{align*}
u = w \quad \textrm{for } \alpha=1
\end{align*}
Now we consider $\alpha \in (0,1)$.\\

The previous result says that
\begin{align*}
u(t, x) 
= & \sum_{n=1}^\infty <u_0,\varphi_n> E_{\alpha,1}(-t^{\alpha}\lambda_n)\psi_n(x)\\
= & \sum_{n=1}^\infty <u_0,\varphi_n> \left( \int_0^\infty e^{- s \lambda_n} \mathbf{P}_0(L_t \in ds) \right) \psi_n(x)\\
= & \int_0^\infty \left( \sum_{n=1}^\infty <u_0,\varphi_n>  e^{- s \lambda_n} \psi_n(x) \right) \mathbf{P}_0(L_t \in ds)\\
= & \int_0^\infty w(s,x)  \mathbf{P}_0(L_t \in ds)
\end{align*}
which is the claim.
\end{proof}

The last result says that the solution $u$ is written in terms of a time-changed process. 
This is not obvious and  will be associated below with  the fact that the behaviour of $X^\Gamma$ on the boundary is not affected from the behaviour on the interior. 
More precisely, we now provide the probabilistic interpretation  of the Laplace equation with fractional dynamical boundary condition.

We recall that, given a random time $T$ (or a stochastic process $T=\{T_t\}_{t\geq 0}$) and a process $Z = \{Z_t\}_{t\geq 0}$, we analogously write $Z_T$ or $Z \circ T$.

Moreover we  introduce the process $X^{0, \Gamma} = \{X^{0, \Gamma}_t\}_{t\geq 0}$ with generator $(-A_0, D(-A_0))$. In particular, we underline that $X^\Gamma = X^{0, \Gamma}$ in case $\Lambda=0$.

\begin{theorem}
\label{th3}
Let $\Lambda=0$, $l \geq 0$, $k <0$ and $\alpha \in (0,1]$. The solution to \eqref{ELambda} has the probabilistic representation
\begin{align}
u(t, x) =  \mathbf{E}_x[\mathbf{E}_{X^+_\tau}[u_0(X^{0, \Gamma}\circ L_t)]], \quad t>0, \; x \in \overline{\Omega}.
\label{ProbRepSol}
\end{align}
\end{theorem}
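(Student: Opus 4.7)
The plan is to evaluate the right-hand side of \eqref{ProbRepSol} directly against the eigenbasis of Theorem \ref{th1} and identify it with the spectral series \eqref{uAlfa}. The computation factorises into an inner expectation on $\Gamma$ (governed by the boundary dynamics of $X^{0,\Gamma}$ time-changed by $L$) and an outer expectation realising the harmonic extension from $\Gamma$ into $\Omega$, exploiting the independence of $L$ from $X^{0,\Gamma}$ built into the construction.

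For the inner piece, fix $y\in\Gamma$. Conditioning on $L_t$ and using the Markov semigroup $P^\Gamma_s:=e^{-sA_0}$ generated by $(-A_0,D(-A_0))$ on $\Gamma$ (whose existence under $k<0$, $l\geq 0$, $\Lambda=0$ is provided by the Dirichlet form machinery following \eqref{DFprocess}) yields
\begin{align*}
\mathbf{E}_y[u_0(X^{0,\Gamma}\circ L_t)] = \int_0^\infty (P^\Gamma_s u_0)(y)\,\mathbf{P}_0(L_t\in ds).
\end{align*}
Since the form in \eqref{DFprocess} is non-negative for $k<0$ and $\Lambda=0$, one has $\lambda_n\geq 0$ and $P^\Gamma_s\varphi_n=e^{-s\lambda_n}\varphi_n$. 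Expanding $u_0=\sum_n <u_0,\varphi_n>\varphi_n$ and invoking the Laplace identity $\mathbf{E}_0[e^{-\lambda_n L_t}]=E_{\alpha,1}(-t^\alpha\lambda_n)$ from \eqref{ContDensL} (equivalently Theorem \ref{thm:eqM} with $\Phi(\lambda)=\lambda^\alpha$) gives
\begin{align*}
\mathbf{E}_y[u_0(X^{0,\Gamma}\circ L_t)] = \sum_{n=1}^\infty <u_0,\varphi_n>\, E_{\alpha,1}(-t^\alpha\lambda_n)\,\varphi_n(y).
\end{align*}

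For the outer piece, $\mu(x,dy):=\mathbf{P}_x(X^+_\tau\in dy)$ is precisely the harmonic measure on $\Gamma$ seen from $x$: this is the Poisson-kernel representation recalled just before the statement, valid because $\Omega$ is smooth, bounded and regular. Applied to $f=\varphi_n$ it yields $\mathbf{E}_x[\varphi_n(X^+_\tau)]=(\D\varphi_n)(x)=\psi_n(x)$ for every $x\in\overline{\Omega}$, the boundary case being trivial since $\mathbf{P}_y(\tau=0)=1$ for $y\in\Gamma$. Taking the outer expectation and exchanging with the sum therefore produces
\begin{align*}
\mathbf{E}_x\bigl[\mathbf{E}_{X^+_\tau}[u_0(X^{0,\Gamma}\circ L_t)]\bigr] = \sum_{n=1}^\infty <u_0,\varphi_n>\, E_{\alpha,1}(-t^\alpha\lambda_n)\,\psi_n(x),
\end{align*}
which coincides with $u(t,x)$ by Theorem \ref{th2}. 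The main technical point to watch is the two interchanges of summation with expectation: both are controlled by the uniform bounds $0\leq e^{-s\lambda_n}\leq 1$ and $|E_{\alpha,1}(-t^\alpha\lambda_n)|\leq 1$ (complete monotonicity, already exploited in the proof of Theorem \ref{th2}), combined with the $H^s(\Gamma)$-summability of the expansion of $u_0$ and the boundedness of $\D:H^s(\Gamma)\to H^{s+\frac12}(\Omega)$ which transfers convergence from $\Gamma$ to $\overline{\Omega}$. The case $\alpha=1$ supplies an independent consistency check through Theorem \ref{thm:SOLw} and $L_t\to t$ almost surely, matching the non-fractional representation in \cite{VV1}.
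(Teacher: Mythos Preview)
Your argument is correct and uses the same two-step factorisation as the paper: an inner expectation on $\Gamma$ capturing the time-changed boundary dynamics, and an outer expectation realising the harmonic extension via the hitting distribution of $X^+$ at time $\tau$. The difference lies in how the inner piece is handled. The paper does not expand spectrally: it simply identifies $v(t,y):=\mathbf{E}_y[u_0(X^{0,\Gamma}\circ L_t)]$ as the solution of the fractional Cauchy problem $D^\alpha_t v=-A_0 v$ on $\Gamma$ (citing \cite{CapDovFEforms19}), and then observes that $u=\D v$ by the very definition of solution to \eqref{ELambda}, so that $u(t,x)=\mathbf{E}_x[v(t,X^+_\tau)]$ follows from the Poisson-kernel representation of $\D$. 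You instead compute $v$ explicitly against the eigenbasis $\{\varphi_n\}$ and close the argument by matching with the series \eqref{uAlfa} of Theorem \ref{th2}.

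Your route has the virtue of being self-contained (no appeal to an external fractional-Cauchy-problem result) and makes the link with the compact representation transparent. The price is a small coverage gap: Theorem \ref{th1} and Theorem \ref{th2} are stated only for $l>0$, so invoking \eqref{uAlfa} for the final identification does not formally handle the case $l=0$ allowed in the statement. The paper's proof treats $l=0$ first, precisely because the boundary generator $kA_{DN}$ alone already supports the cited time-change representation without any spectral hypothesis from Theorem \ref{th1}. To make your argument cover $l=0$ you can either record separately that $-kA_{DN}$ (with $k<0$) is self-adjoint non-negative with compact resolvent and hence admits the analogous eigen-expansion, or, more simply, replace the final appeal to Theorem \ref{th2} by the observation that your computed series is exactly $\D v$ with $v$ solving $D^\alpha_t v=-A_0 v$, which is the definition of the solution to \eqref{ELambda}.
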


\begin{proof}
Let us consider the solution $u: (0, \infty)\times \overline{\Omega} \to \mathbb{R}$ to the problem
\begin{equation}
\left\lbrace
\begin{array}{ll}
\displaystyle \Delta u = 0, & \textrm{in } (0, \infty) \times \Omega,\\
\displaystyle Tu = v, & \textrm{on } (0, \infty) \times \Gamma.
\end{array}
\right .
\label{eqProofHarm}
\end{equation}
Then $\forall\, t \in (0, \infty)$, the solution $u(t, \cdot)$ has the probabilistic representation 
\begin{align}
u(t,x) = \mathbf{E}_x[v(t, X^+_{\tau})], \quad t\geq 0, \; x \in \overline{\Omega}.
\label{eq0}
\end{align}
Now, for $l=0$, we consider the datum $v: (0, \infty)\times \Gamma \to \mathbb{R}$ as the solution to the problem
\begin{equation}
\left\lbrace
\begin{array}{ll}
\displaystyle D^\alpha_t v = k A_{DN} v, & (0, \infty) \times \Gamma\\
\displaystyle v_0 = u_0 \in H^s(\Gamma)
\end{array}
\right.
\label{fcpDN}
\end{equation}
where $-A_{DN}$ is the generator of the right-continuous Markov process $X^{0, \Gamma}$ on $\Gamma$. Thus, for the fractional Cauchy problem \eqref{fcpDN} we have (see for example \cite{CapDovFEforms19})
\begin{align}
v(t,x) = \mathbf{E}_x[u_0(X^{0, \Gamma} \circ L_t)], \quad t\geq 0,\; x \in \Gamma.
\label{eq2}
\end{align}
Thus, 
\begin{align*}
u(t,x) \stackrel{\eqref{eq0}}{=} \mathbf{E}_x[v(t, X^+_\tau)] \stackrel{\eqref{eq2}}{=} \mathbf{E}_x\left[\mathbf{E}_{X^+_\tau}[u_0(X^{0, \Gamma}\circ L_t)]\right], \quad t\geq 0,\; x \in \overline{\Omega}.
\end{align*}
In case $l>0$, by considering the datum $v: (0, \infty)\times \Gamma \to \mathbb{R}$ as the solution to the problem
\begin{equation}
\left\lbrace
\begin{array}{ll}
\displaystyle D^\alpha_t v = k A_{DN} v + l\Delta_\Gamma v, \quad (0, \infty) \times \Gamma\\
\displaystyle v_0 = u_0 \in H^s(\Gamma)
\end{array}
\right.
\end{equation}
we include also the diffusive part, that is the Brownian motion on $\Gamma$ with generator $(\Delta_\Gamma, D(\Delta_\Gamma))$. Since $k<0$, the solution $u$ to the problem \eqref{E} has the probabilistic representation
\begin{align*}
u(t,x) 
= & \mathbf{E}_x\left[\mathbf{E}_{X^+_\tau}[u_0(X^{0, \Gamma} \circ L_t)]\right],  \quad t\geq 0,\; x \in \overline{\Omega}
\end{align*}
where $X^{0, \Gamma}$ on $\Gamma$ is generated by $(-A_0, D(-A_0))$.
\end{proof}

We now consider the case $\Lambda>0$.\begin{theorem}
\label{thm:timeChange}
Let $\Lambda>0$, $l \geq 0$, $k <0$ and $\alpha \in (0,1]$. The solution to \eqref{ELambda} has the probabilistic representation
\begin{align*}
u(t,x) 
= & \mathbf{E}_x\left[\mathbf{E}_{X^+_\tau}[u_0(X^\Gamma \circ L_t)]\right]\\
= & \mathbf{E}_x\left[e^{-\Lambda L_t} \mathbf{E}_{X^+_\tau}[u_0(X^{0, \Gamma} \circ L_t)]\right], \quad t\geq 0,\; x \in \overline{\Omega}.
\end{align*}
In particular,
\begin{align*}
u(t,x) 
= & \int_0^\infty e^{-\Lambda s} \mathbf{E}_x\left[\mathbf{E}_{X^+_\tau}[u_0(X^{0, \Gamma}_s)]\right]  \mathbf{P}_0(L_t \in ds)\\
= & \int_0^\infty e^{-\Lambda s} w(s,x) \mathbf{P}_0(L_t \in ds)
\end{align*}
where $w$ is the solution to \eqref{EsolVVw}.
\end{theorem}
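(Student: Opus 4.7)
The plan is to mirror the argument of Theorem \ref{th3}, the only new ingredient being the treatment of the conservative term $\Lambda I$. First, I would reduce the bulk problem to the boundary problem via harmonic extension. Since $u$ is harmonic in $\Omega$ for every fixed $t>0$ and coincides with $v := Tu$ on $\Gamma$, the Dynkin/Poisson representation \eqref{eq0} used in the proof of Theorem \ref{th3} gives
\begin{align*}
u(t,x) = \mathbf{E}_x[v(t, X^+_\tau)], \quad x \in \overline{\Omega},\; t \geq 0,
\end{align*}
so the whole problem is reduced to identifying $v$ on $\Gamma$ where $v$ solves $D_t^\alpha v = -A_\Lambda v$ with $v(0)=u_0$.

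Next I would handle the fractional Cauchy problem on the compact manifold $\Gamma$. Since $(-A_\Lambda, D(-A_\Lambda))$ generates a Markov semigroup (a conservative Feller semigroup for $\Lambda=0$ and a sub-Markov one for $\Lambda>0$) with associated process $X^\Gamma$, the standard subordination theory for fractional Cauchy problems (as used in the proof of Theorem \ref{th3} and in \cite{CapDovFEforms19}) yields
\begin{align*}
v(t,y) = \mathbf{E}_y[u_0(X^\Gamma \circ L_t)], \quad y \in \Gamma,
\end{align*}
which plugged into the harmonic extension gives the first equality in the statement.

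The key new step is the Feynman--Kac identification of $X^\Gamma$ as the process $X^{0,\Gamma}$ killed at constant rate $\Lambda$. Because $-A_\Lambda = -A_0 - \Lambda I$ on $D(-A_0)=D(-A_\Lambda)$ (for $k<0$, $l\geq 0$), the two semigroups are related by the multiplicative perturbation
\begin{align*}
\mathbf{E}_y[f(X^\Gamma_s)] = e^{-\Lambda s}\,\mathbf{E}_y[f(X^{0,\Gamma}_s)], \quad s \geq 0,\; y \in \Gamma,
\end{align*}
which follows from uniqueness of the semigroup solving $\partial_s P_s f = -A_\Lambda P_s f$. Since the inverse subordinator $L$ is independent of $X^\Gamma$ and of $X^{0,\Gamma}$, conditioning on $L_t = s$ and using Fubini gives
\begin{align*}
\mathbf{E}_y[u_0(X^\Gamma \circ L_t)]
= \int_0^\infty e^{-\Lambda s}\, \mathbf{E}_y[u_0(X^{0,\Gamma}_s)]\, \mathbf{P}_0(L_t \in ds)
= \mathbf{E}_y\bigl[e^{-\Lambda L_t}\, u_0(X^{0,\Gamma}\circ L_t)\bigr],
\end{align*}
which is the second equality. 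Finally, integrating the outer expectation (which is over $X^+$, again independent of $L$) and interchanging the integral against $\mathbf{P}_0(L_t \in ds)$ with the iterated expectation, I obtain
\begin{align*}
u(t,x)
= \int_0^\infty e^{-\Lambda s}\, \mathbf{E}_x\!\left[\mathbf{E}_{X^+_\tau}[u_0(X^{0,\Gamma}_s)]\right] \mathbf{P}_0(L_t \in ds),
\end{align*}
and by Theorem \ref{thm:SOLw} (applied with $\alpha=1$, i.e.\ $L_s = s$) the factor $\mathbf{E}_x[\mathbf{E}_{X^+_\tau}[u_0(X^{0,\Gamma}_s)]]$ is exactly the solution $w(s,x)$ to \eqref{EsolVVw}, yielding the third expression.

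The main obstacle is the Feynman--Kac step: one must verify that the identity $P^\Lambda_s = e^{-\Lambda s} P^0_s$ lifts from the semigroup level to the probabilistic level and that $L$ is genuinely independent of the spatial processes, so that Fubini and the exchange of $e^{-\Lambda L_t}$ between inside and outside the $\mathbf{E}_{X^+_\tau}$-expectation are both justified; all the remaining steps are direct consequences of the already established Theorems \ref{th3} and \ref{thm:SOLw}.
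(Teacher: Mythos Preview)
Your proposal is correct and follows essentially the same approach as the paper: reduce to the boundary via harmonic extension, invoke the Feynman--Kac identity $P^\Lambda_s = e^{-\Lambda s}P^0_s$ to relate $X^\Gamma$ and $X^{0,\Gamma}$, and use the independence of $L$ to pass to the time-changed representation and identify the integrand with $w$. The only cosmetic difference is the order of the two main steps---the paper first establishes the $\alpha=1$ case via Feynman--Kac and then time-changes, whereas you first write the fractional solution via $X^\Gamma\circ L_t$ and then peel off the $e^{-\Lambda s}$ factor---but the content is the same; for the final identification of $\mathbf{E}_x[\mathbf{E}_{X^+_\tau}[u_0(X^{0,\Gamma}_s)]]$ with $w(s,x)$ it is cleaner to cite Theorem~\ref{th3} with $\Lambda=0$, $\alpha=1$ rather than Theorem~\ref{thm:SOLw}.
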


\begin{proof}
The process $X^\Gamma$ on $\Gamma$ is generated by $(-A_\Lambda, D(-A_\Lambda))$ as defined above and $X^{0,\Gamma}$ on $\Gamma$ is right-continuous and generated by $(-A_0, D(-A_0))$ where $-A_0$ is given by $-A_{DN}$ and $\Delta_\Gamma$ without perturbation. Thus, $X^{0, \Gamma}$ is a Markov process with right-continuous paths for which the Feynmann-Kac formula
\begin{align*}
v(t,x) = \mathbf{E}_x \left[ e^{-\Lambda t} u_0(X^{0, \Lambda}_t) \right], \quad t >0,\, x \in \Gamma
\end{align*}
solves the problem to find $v \in C((0, \infty), \Gamma)$ such that
\begin{align}
\dot{v} = -A_0 v - \Lambda v, \quad v_0 = u_0 \in D(-A_0).
\label{CPproof}
\end{align}
That is, we solve $\dot{v} = -A_\Lambda v$ on $(0, \infty) \times \Gamma$. Thus, as in \eqref{eqProofHarm}, we proceed by considering
\begin{align}
\mathbf{E}_x[v(t,X^+_\tau)] = e^{-\Lambda t} \mathbf{E}_x\left[\mathbf{E}_{X^+_\tau}[u_0(X^{0, \Gamma}_t)]\right] = e^{-\Lambda t} w(t,x), \quad t\geq 0,\; x \in \overline{\Omega}
\label{TEMPw} 
\end{align}
in order to obtain a solution to \eqref{ELambda} with $\alpha=1$. The last equality is justified by the fact that $A_\Lambda$ is positive definite, then Theorem \ref{thm:SOLw} applies. The solution \eqref{TEMPw} coincides with $w$ as $\Lambda=0$.

For $\alpha \in (0,1)$, the fractional Cauchy problem associated with \eqref{CPproof} has a solution obtained via time-changed semigroup. According with the proof of Theorem \ref{th3} we can write
\begin{align*}
u(t,x) 
= & \mathbf{E}_x[v(L_t, X^+_\tau)]\\
= & \int_0^\infty e^{-\Lambda s} w(s,x) \mathbf{P}_0(L_t \in ds)\\
= &  \mathbf{E}_x\left[e^{-\Lambda L_t} \mathbf{E}_{X^+_\tau}[u_0(X^{0,\Gamma} \circ L_t)]\right]\\
= & \mathbf{E}_x\left[\mathbf{E}_{X^+_\tau}[u_0(X^\Gamma \circ L_t)]\right] \quad t\geq 0,\; x \in \overline{\Omega}.
\end{align*}
This concludes the proof.

\end{proof}

\section{General non local conditions }

The fractional problem \eqref{E} can be regarded as special case of the non-local problem
\begin{align}
\label{ENL}
\left\lbrace
\begin{array}{ll}
\displaystyle \Delta u=0 \qquad
&\text{in
$(0,\infty)\times\Omega$,}\\
 D_t^{\Phi} u = k \partial_{\bf n} u +l\Delta_\Gamma u - \Lambda u \qquad &\text{on
$(0,\infty)\times \Gamma$,}\\
u(0,x)=u_0(x) &
 \text{on $ \Gamma$.}
\end{array}
\right .
\end{align}

For the process $L$ with symbol $\Phi$ we introduce the function
\begin{align*}
M^\Lambda_\Phi(t, \lambda) : = \mathbf{E}_0[\exp - (\Lambda + \lambda) L_t], \quad (\lambda + \Lambda) \geq 0,\; \Lambda > 0,\; t>0.
\end{align*}

Also for this problem  we provide the compact representation of the solution to \eqref{ENL} which is strictly related with Theorem \ref{th2}.

\begin{theorem}
\label{th2NL}
Let $\Lambda=0$, $l > 0$, $k \in \mathbb{R}$ and $\alpha \in (0,1]$.  Let assume  $\lambda_1 + \Lambda \geq 0.$
Then for any $u_0 \in H^{s+2}( \Gamma)$ the solution $u$ of problem \eqref{ENL} is given by
\begin{align}
u(t, x) =\sum_{n=1}^\infty <u_0,\varphi_n> M^\Lambda_\Phi(t, \lambda_n)\psi_n(x), \quad t>0, \; x \in \overline{\Omega}
\label{uPhi}
\end{align}  
the series being convergent in $C((0,\infty) \times \bar\Omega)$.
\end{theorem}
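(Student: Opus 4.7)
The plan is to imitate the separation-of-variables proof of Theorem \ref{th2}, replacing the Mittag--Leffler factor with $M^\Lambda_\Phi$ by invoking Theorem \ref{thm:eqM}. Setting $u(t,x)=G(t)F(x)$ in the boundary equation of \eqref{ENL} and dividing by $G(t)F(x)$ leads to
\begin{align*}
\frac{D^\Phi_t G(t)}{G(t)} = \frac{k\partial_{\mathbf{n}} F(x)+l\Delta_\Gamma F(x)-\Lambda F(x)}{F(x)} = -\lambda.
\end{align*}
Writing $\lambda = \lambda_n + \Lambda$, the spatial problem reduces exactly to the eigenproblem \eqref{E2} supplied by Theorem \ref{th1}, whose solutions are the Dirichlet harmonic extensions $\psi_n = \mathcal{D}\varphi_n$. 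The temporal problem becomes $D^\Phi_t G = -(\lambda_n+\Lambda)G$ with $G(0)=1$, and by Theorem \ref{thm:eqM} its unique solution is $G(t)=\mathbf{E}_0[\exp(-(\lambda_n+\Lambda)L_t)]=M^\Lambda_\Phi(t,\lambda_n)$. Expanding $u_0=\sum_n\langle u_0,\varphi_n\rangle\varphi_n$ in the Hilbert basis of Theorem \ref{th1} and superposing gives the candidate \eqref{uPhi}.

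To justify the series I would next prove convergence on $\Gamma$ in $C([0,\infty);H^s(\Gamma))$, then transfer it to $\bar\Omega$ through the bounded linear map $\mathcal{D}:H^s(\Gamma)\to H^{s+1/2}(\Omega)$, exactly as at the end of the proof of Theorem \ref{th2}. The hypothesis $\lambda_1+\Lambda\geq 0$ plays the same role as $\lambda_1\geq 0$ does there: since $L_t\geq 0$ a.s., it yields the uniform bound
\begin{align*}
0 \leq M^\Lambda_\Phi(t,\lambda_n) \leq M^\Lambda_\Phi(t,\lambda_1) \leq 1,\qquad t\geq 0,\ n\geq 1,
\end{align*}
(in fact $t\mapsto M^\Lambda_\Phi(t,\lambda_n)$ is completely monotone, but only the bound by $1$ is needed). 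Coupled with $|||\varphi_n|||^2_{H^s(\Gamma)}=(\lambda_n+\Lambda)^s$, this gives
\begin{align*}
|||\langle u_0,\varphi_n\rangle M^\Lambda_\Phi(t,\lambda_n)\varphi_n|||^2_{H^s(\Gamma)} \leq |\langle u_0,\varphi_n\rangle|^2(\lambda_n+\Lambda)^s,
\end{align*}
so absolute convergence in $C([0,\infty);H^s(\Gamma))$ follows from $u_0\in H^s(\Gamma)\subset H^{s+2}(\Gamma)$. The Laplace equation on $\Omega$ is automatic because each $\psi_n$ is harmonic.

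The main obstacle, and the reason the hypothesis on the datum has been strengthened from $H^s$ to $H^{s+2}$, is the convergence of the termwise derivative
\begin{align*}
D^\Phi_t u(t,x) = -\sum_{n=1}^\infty \langle u_0,\varphi_n\rangle(\lambda_n+\Lambda)M^\Lambda_\Phi(t,\lambda_n)\psi_n(x).
\end{align*}
In the stable case treated in Theorem \ref{th2} one exploits the sharp estimate $E_{\alpha,1}(-t^\alpha\lambda_n)\lambda_n\leq \lambda_n/(1+t^\alpha\lambda_n)\leq t^{-\alpha}$, which removes one power of $\lambda_n$ on every compact time interval bounded away from zero. For a general Bernstein symbol $\Phi$ no such universal bound is available: one can only use $|(\lambda_n+\Lambda)M^\Lambda_\Phi(t,\lambda_n)|\leq(\lambda_n+\Lambda)$. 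Squaring produces the factor $(\lambda_n+\Lambda)^{s+2}$ in the $H^s(\Gamma)$-norm of the $n$-th summand, so finite total mass reduces to $\sum_n|\langle u_0,\varphi_n\rangle|^2(\lambda_n+\Lambda)^{s+2}<\infty$, i.e.\ to $u_0\in H^{s+2}(\Gamma)$. With this improved regularity the series for $D^\Phi_t u$ converges in $C([0,\infty);H^s(\Gamma))$, and termwise differentiation is legitimate.

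Finally, interchanging $D^\Phi_t$ with the sum follows from the uniform convergence of the differentiated series together with the representation of $D^\Phi_t$ as a convolution with the locally integrable kernel $\kappa$, and Sobolev embedding converts the $H^{s+1/2}(\Omega)$-convergence into $C((0,\infty)\times\bar\Omega)$-convergence for $s$ chosen sufficiently large. For $\Phi(\lambda)=\lambda^\alpha$ one recovers Theorem \ref{th2}, and the $\Lambda=0$ instance stated yields precisely \eqref{uPhi}.
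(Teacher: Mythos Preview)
Your proof is correct and follows essentially the same route as the paper: eigenfunction expansion via Theorem~\ref{th1}, the temporal equation solved by Theorem~\ref{thm:eqM}, monotonicity of $M^\Lambda_\Phi$ for the undifferentiated series, and the crucial observation that without a Mittag--Leffler-type decay estimate one must sacrifice two orders of regularity on $u_0$ to control $D^\Phi_t u$ (the paper records exactly your bound $\sum_n|\langle u_0,\varphi_n\rangle|^2(\lambda_n+\Lambda)^{s+2}<\infty$ as its estimate \eqref{fact1}). One small slip: the Sobolev inclusion should read $H^{s+2}(\Gamma)\subset H^s(\Gamma)$, not the reverse.
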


\begin{proof}
First we observe that $\lambda_n + \Lambda \geq 0$ implies that the \eqref{uPhi} writes
\begin{align}
\label{eqPROOFuLambda}
u(t,x) = \sum_{n=1}^\infty <u_0,\varphi_n> \left( \int_0^\infty e^{-(\lambda_n + \Lambda) s} \mathbf{P}_0(L_t \in ds) \right) \psi_n(x), \quad t>0, \; x \in \overline{\Omega}
\end{align}
and, for a given $\Lambda>0$, $M^\Lambda_\Phi(t, \lambda_1) \leq M^\Lambda_\Phi(t, \lambda_n)$ $\forall\, n$. We can use the same arguments as in the proof of Theorem \ref{th2} together with the following facts:  as for $u_0 \in H^{s+2}(\Omega)$
\begin{align}
||| D^\Phi_t u(t,x) |||_{H^s(\Gamma)} \leq \sqrt{\sum_{n=1}^\infty |<u_0,\varphi_n>|^2 |\lambda_n + \Lambda|^{2+s} } < \infty 
\label{fact1}
\end{align}
and
\begin{align}
||| -A_\Lambda u |||_{H^s(\Gamma)} \leq ||| A_0 u + \Lambda u |||_{H^s(\Gamma)} \leq \sqrt{\sum_{n=1}^\infty |<u_0,\varphi_n>|^2 |\lambda_n + \Lambda|^{2+s} }< \infty 
\label{fact2}
\end{align}

By replacing \eqref{eqML} with
\begin{align}
D^\Phi_t M^\Lambda_\Phi(t, \lambda) = - (\lambda + \Lambda) M^\Lambda_\Phi(t, \lambda), \quad M^\Lambda_\Phi(0, \lambda) = 1
\label{eqM}
\end{align}
we can show that $u$ solves \eqref{ENL}. The equation \eqref{eqM} can be obtained from Theorem \ref{thm:eqM}.

Moreover, the series \eqref{uPhi} converges in $L^2(\overline{\Omega})$ uniformly in $(0, \infty)$ and, from the monotonicity of $M^\Lambda_\Phi (t,\lambda)$,
\begin{align*}
\sum_{n=1}^{\infty} || <u_0,\varphi_n> M^\Lambda_\Phi(t, \lambda_n) \varphi_n(x)||^2_{C([0,\infty), H^s(\Gamma))}< \infty
\end{align*}
implies that \eqref{uPhi} converges in $C((0, \infty)\times \overline{\Omega})$. \end{proof}

We point out that the probabilistic representation of \eqref{uPhi} can be obtained by following the same arguments as in Section 3.\\

We conclude by noting that this result can be useful to model delayed and rushed motions   through time change (see, for definitions and examples,   \cite{CapDovDelRus17}).

\vspace{1cm}

{\bf Acknowledgments}
The authors are supported by INdAM-GNAMPA and Sapienza University of Rome (Ateneo 2021 and Ateneo 2022). The authors thank MUR for the support under the project PRIN 2022 - 2022XZSAFN: Anomalous Phenomena on Regular and Irregular Domains: Approximating Complexity for the Applied Sciences - CUP B53D23009540006 (\url{https://www.sbai.uniroma1.it/~mirko.dovidio/prinSite/index.html}).

\end{document}